\newtheorem{theorem}{Theorem}
\theoremstyle{plain}
\newtheorem{corollary}{Corollary}
\newtheorem{lemma}{Lemma}
\newtheorem{proposition}{Proposition}
\numberwithin{equation}{section}
\begin{document}
\title[Fixed Point Approximation]{The Approximation to a Fixed Point}
\author{Sibylla Priess-Crampe}
\address{Mathematisches Institut der Universit\"{a}t M\"{u}nchen, Theresienstr.39,
80333 M\"{u}nchen}
\email{priess@math.lmu.de}
\author{Paulo Ribenboim}
\address{Department of Mathematics and Statistics, Queen's University, Kingston,
Ontario K7L3N6,Canada}
\email{mathstat@mast.queensu.ca}
\date{July 12, 2013}
\subjclass[2000]{Primary 54H25; Secondary 13A18.}
\keywords{ultrametric spaces, fixed points, process of approximation}

\begin{abstract}
In a spherically complete ultrametric space, a strictly contracting mapping
has a fixed point. We indicate in this paper how this fixed point can either
be reached or approximated.

\end{abstract}
\maketitle

\section{Introduction}

In our recent paper ``Ultrametric Dynamics'' \cite{UD} and earlier papers
\cite{Ba}, \cite{FP}, \cite{CP}, \cite{Att}, we proved theorems asserting the
existence of fixed points for self-maps of ultrametric spaces, as well as
similar theorems for common points of mappings. The proof of these theorems is
non-constructive, it gives no indication how to reach or to approximate a
fixed point by means of an algorithm. For the use in applications, the
indication of an algorithm of approximation is an essential complement to the
existence theorem.

Our treatment takes place in the following framework: $(X,d,\Gamma)$ is a
principally complete ultrametric space (see section 2 for the definitions of
the requested concepts). Let $\varphi:X\rightarrow X$ and let $Y$\ be a
non-empty subset of $X$. By the Fixed Point Theorem (see Theorem \ref{FPTh}),
if $\varphi$\ is a strictly contracting mapping, there exists a unique element
$z\in X$ such that $\varphi(z)=z$. For any $y\in Y$, we indicate an algorithm,
beginning at $y$, such that either the algorithm ``reaches'' $z$ (see definition
in section 3) or the algorithm provides an ``asymptotic approximation'' to
$z$\ (see definition in section 3).

A special case of our present Approximation Theorem was proved and used in our
papers ``Differential equations over valued fields (and more)'' \cite{DE} and
``Systems of differential equations over valued fields'' \cite{SDE}.

One of the merits of our paper is that we deal with ultrametric spaces having
sets of distances which are not necessarily totally ordered. For this reason
the technique of approximation is very involved, but the results will then be
applicable to general kinds of algorithms. Noticeable is that, when the fixed
point is not in the ultrametric space itself but in some completion, our
results indicate how to approximate the fixed point. This is often the
situation for differential equations.

\section{Definitions and relevant results}

Besides definitions, we present in this section also the results which are
required in the sequel. Proofs will only be given if we cannot refer to any
relevant publications.

\subsection{Ultrametric spaces, their extensions and completions}

Let $(\Gamma,\leq)$ be an ordered set with smallest element $0$. Let $X$ be a
non-empty set. A mapping $d:X\times X\rightarrow\Gamma$ is called an
\textit{ultrametric distance} (and $(X,d,\Gamma)$ an \textit{ultrametric
space}) if $d$ has the following properties for all $x,y,z\in X$ and
$\gamma\in\Gamma$:

\begin{description}
\item[(d1)] $d(x,y)=0$ if and only if $x=y$

\item[(d2)] $d(x,y)=d(y,x)$.

\item[(d3)] If $d(x,y)\leq\gamma$ and $d(yz)\leq\gamma$ then $d(x,z)\leq
\gamma$.
\end{description}

If there is no ambiguity, we simply write $X$\ instead of $(X,d,\Gamma)$.

If $\Gamma$\ is totally ordered, (d3) becomes:

\begin{description}
\item[(d3')] $d(x,z)\leq\max\{d(x,y),d(y,z)\}$ for all $x,y,z\in X$.
\end{description}

Let $(Y,d_{\mid Y},\Gamma_{Y})$ and $(X,d,\Gamma)$ be ultrametric spaces such
that $Y\subseteq X$ and $\Gamma_{Y}\subseteq\Gamma$. Assume that $\Gamma_{Y}$
has the induced order of $\Gamma$\ and the same $0$\ as $\Gamma$\ and that
furthermore, $d_{\mid Y}(Y\times Y)\subseteq\Gamma_{Y}$ and $d_{\mid
Y}(y,y^{\prime})=d(y,y^{\prime})$ for all $y,y^{\prime}\in Y$. Then
$(Y,d_{\mid Y},\Gamma_{Y})$\ is said to be a \textit{subspace} of
$(X,d,\Gamma)$\ and $X$\ is called an \textit{extension} of $Y$. Often we
simply write $d$\ instead of $d_{\mid Y}$.

The space $X$ is said to be \textit{solid\/} if for every $\gamma\in\Gamma$
and $x\in X$ there exists $y\in X$ such that $d(x,y)=\gamma$. If $X$ is solid
then $d(X\times X)=\Gamma$.

Let $\gamma\in\Gamma^{\bullet}=\Gamma\setminus\{0\}$ and $a\in X$. The set
$B_{\gamma}(a)=\{x\in X\mid d(a,x)\leq\gamma\}$ is called a \textit{ball}. The
element $a$\ is said to be a \textit{center\/} of $B_{\gamma}(a) $ and the
element $\gamma$ to be a \textit{radius\/} of $B_{\gamma}(a)$. If $x,y\in X$,
$x\neq y$, then $B(x,y)=B_{d(x,y)}(x)$ is called a \textit{principal ball}. If
$X$ is solid, every ball is principal.

In the following lemma, we list some properties of balls which can easily be verified.

\begin{lemma}
\label{balls}Let $\gamma,\delta\in\Gamma^{\bullet}$.

\begin{enumerate}
\item Let $x,y\in X$.

\begin{enumerate}
\item If $\gamma\leq\delta$ and $B_{\gamma}(x)\cap B_{\delta}(y)\neq\emptyset$
then $B_{\gamma}(x)\subseteq B_{\delta}(y)$.

\item If $B_{\delta}(y)\subset B_{\gamma}(x)$ then $\gamma\nleq\delta$.
\end{enumerate}

\item Concerning principal balls, if $x,y,z,u\in X$, $x\neq z$ and $y\neq u$, then:

\begin{enumerate}
\item $B(x,z)\subseteq B_{\delta}(y)$ if and only if $d(x,z)\leq\delta$ and
$x\in B_{\delta}(y)$.

\item If $B(x,z)\subset B_{\delta}(y)$ then $d(x,z)<\delta$.

\item If $B(x,z)=B(y,u)$ then $d(x,z)=d(y,u)$.
\end{enumerate}

\item Let $X$ be solid and $x,y\in X$.

\begin{enumerate}
\item $B_{\gamma}(x)\subseteq B_{\delta}(y)$ if and only if $\gamma\leq\delta$
and $x\in B_{\delta}(y)$.

\item If $B_{\gamma}(x)\subset B_{\delta}(y)$ then $\gamma<\delta$.

\item If $B_{\gamma}(x)=B_{\delta}(y)$ then $\gamma=\delta$.
\end{enumerate}

\item If $\Gamma$ is totally ordered and $B_{\delta}(y)\subset B_{\gamma}(x)$
then $\delta<\gamma$.
\end{enumerate}
\end{lemma}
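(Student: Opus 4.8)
The whole lemma rests on item (1)(a), which is the only place where the ultrametric axiom (d3) does real work; every other assertion reduces to it together with antisymmetry of $\leq$ and, where flagged, solidity or total ordering. The plan is to prove (1)(a) first and directly. Assuming $\gamma\leq\delta$ and picking $w\in B_{\gamma}(x)\cap B_{\delta}(y)$, take an arbitrary $a\in B_{\gamma}(x)$. Then $d(a,x)\leq\gamma\leq\delta$ and $d(x,w)\leq\gamma\leq\delta$, so (d3) with pivot $x$ gives $d(a,w)\leq\delta$; since $d(w,y)\leq\delta$ as well, a second application of (d3) with pivot $w$ yields $d(a,y)\leq\delta$, i.e. $a\in B_{\delta}(y)$. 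The only bookkeeping here is to use (d2) to orient the distances and to note that $0$ is the least element, so that every ball contains its centre.

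With (1)(a) in hand the rest is essentially combinatorics of inclusions. For (1)(b), a strict inclusion $B_{\delta}(y)\subset B_{\gamma}(x)$ together with $\gamma\leq\delta$ would, via (1)(a) (the balls meet at $y$), force $B_{\gamma}(x)\subseteq B_{\delta}(y)$ and hence equality, a contradiction; so $\gamma\nleq\delta$. For (2)(a) I read $B(x,z)$ as $B_{d(x,z)}(x)$: the reverse implication is immediate from (1)(a) applied at the common point $x$, and the forward implication follows because $x,z\in B(x,z)\subseteq B_{\delta}(y)$ give $d(x,y)\leq\delta$ and $d(y,z)\leq\delta$, whence $d(x,z)\leq\delta$ by (d3). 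Item (2)(b) is the equality case of (2)(a): if $d(x,z)=\delta$, then $x\in B_{\delta}(y)$ makes $y\in B_{\delta}(x)=B(x,z)$, so the two $\delta$-balls meet and (1)(a) forces $B_{\delta}(y)\subseteq B(x,z)$, contradicting strictness. Item (2)(c) is pure antisymmetry: each of $B(x,z)\subseteq B(y,u)$ and $B(y,u)\subseteq B(x,z)$ gives one of $d(x,z)\leq d(y,u)$, $d(y,u)\leq d(x,z)$ by (2)(a).

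Part (3) is where solidity enters. In (3)(a) the reverse implication is again (1)(a); for the forward one, $x\in B_{\gamma}(x)\subseteq B_{\delta}(y)$ gives $x\in B_{\delta}(y)$, and solidity supplies a point $z$ with $d(x,z)=\gamma\in\Gamma^{\bullet}$, which lies in $B_{\gamma}(x)\subseteq B_{\delta}(y)$, so $d(y,z)\leq\delta$; then (d3) through $y$ gives $\gamma=d(x,z)\leq\delta$. Items (3)(b) and (3)(c) are then exact analogues of (2)(b) and (2)(c), obtained from (3)(a) plus antisymmetry. Finally (4) is (1)(b) read in a total order: $\gamma\nleq\delta$ becomes $\delta<\gamma$. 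The one genuine hazard throughout is that $\Gamma$ is only partially ordered, so I must treat every strict inclusion as ``$\leq$ and $\neq$'' and never convert a $\nleq$ into a $>$ except in part (4); it is antisymmetry, not trichotomy, that turns the two one-sided inclusions into the desired equalities of radii.
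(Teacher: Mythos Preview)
Your argument is correct. The paper does not supply a proof of this lemma at all---it simply states that the properties ``can easily be verified''---so there is nothing to compare; your write-up fills in the verification cleanly, with the correct dependencies (everything reduces to (1)(a) via antisymmetry, with solidity and total ordering invoked exactly where the statement flags them).
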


A set of balls which is totally ordered by inclusion is said to be a
\textit{chain}. An ultrametric space $X$ is called \textit{spherically
complete} (respectively, \textit{principally complete}) if every chain of
balls of $X$ (respectively, principal balls of $X$) has a non-empty
intersection. Every spherically complete ultrametric space is principally
complete. The converse is true when $\Gamma$ is totally ordered or the space
is solid.

An ultrametric space $(X,d,\Gamma)$ is said to be \textit{complete} if every
chain of balls $\{B_{\gamma_{i}}(a_{i})\mid i\in I\}$, with $\inf\{\gamma
_{i}\mid i\in I\}=0$, has a non-empty intersection. Thus a spherically
complete ultrametric space is complete. If $\Gamma$\ is totally ordered and if
$\Gamma^{\bullet}$ does not have a smallest element, the ultrametric distance
induces on $X$\ a uniformity, hence also a topology. In this case, the concept
of completeness coincides with that given by the uniformity.

Assume that $\Gamma$\ is totally ordered. Let $(Y,d_{\mid Y},\Gamma)$\ be a
subspace of $(X,d,\Gamma)$ and assume that $d(Y\times Y)=d(X\times X)=\Gamma$.
If for every $x\in X$ and for every $y\in Y$, with $x\neq y$, there exists
$y^{\prime}\in Y$ such that $d(y^{\prime},x)<d(y,x)$, the extension $Y\prec X$
is called \textit{immediate}; if for every $x\in X$ and for every $0<\gamma
\in\Gamma$ there exists $y\in Y$ such that $d(x,y)<\gamma$, the subspace is
said to be \textit{dense} in $X$. If $\Gamma^{\bullet}$ does not have a
smallest element, this definition coincides with that given by the topology of
$X$. (We remark that both notions, ``immediate'' and ``dense'', can be defined
more generally for ultrametric spaces , where $\Gamma$\ is only ordered, see
\cite{PR97}).

\begin{theorem}
\label{compl}

\begin{enumerate}
\item Every ultrametric space $(X,d,\Gamma)$, with $\Gamma$\ totally ordered,
has an immediate extension which is spherically complete. (We call such an
extension a \textit{spherical completion} of $X$).

\item Every ultrametric space $(X,d,\Gamma)$, with $\Gamma$\ totally ordered,
has an extension $(X^{\prime},d,\Gamma)$ such that $X^{\prime}$\ is complete
and $X$ is dense in $X^{\prime}$. (We call such an extension a
\textit{completion} of $X$).

\item Let $(Y,d,\Gamma_{Y})$\ be a subspace of $(X,d,\Gamma)$. Assume that
$\Gamma$\ is totally ordered and that $\Gamma_{Y}^{\bullet}$ is coinitial in
$\Gamma^{\bullet}$ and that furthermore $d(Y\times Y)=\Gamma_{Y}^{\bullet}$,
$d(X\times X)=\Gamma$. If $X$\ is complete then there exists one and only one
completion $\widehat{Y}$ of $Y$\ which is a subspace of $X$.
\end{enumerate}
\end{theorem}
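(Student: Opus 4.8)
The plan is to prove the three parts in order, using (1) as the engine that drives (2) and (3). For (1), I would realize the spherical completion as a \emph{maximal immediate extension}. First one fixes a set-theoretic bound: a point of any immediate extension of $X$ is determined by the nest of balls of $X$ surrounding it, so the immediate extensions of $X$ (up to isomorphism fixing $X$) may be taken inside a fixed set. Order them by the subspace relation; the union of a chain of immediate extensions of $X$ is again an immediate extension of $X$ (both the approximation property and the equality $d(X\times X)=\Gamma$ pass to the union), so Zorn's Lemma yields a maximal immediate extension $X\prec\widetilde{X}$. The decisive input is the \emph{filling lemma}: every ultrametric space over a totally ordered $\Gamma$ that is not spherically complete admits a \emph{proper} immediate extension. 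Granting it, $\widetilde{X}$ must be spherically complete, since otherwise it could be properly extended, contradicting maximality; being a spherically complete immediate extension, $\widetilde{X}$ is a spherical completion.

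The filling lemma is where essentially all the difficulty lies. Given a chain $\mathcal{C}$ of balls of $Z$ with $\bigcap\mathcal{C}=\emptyset$, note that $\mathcal{C}$ has no smallest member, for a smallest ball would contribute its center to the intersection. Adjoin a new point $z\notin Z$ and, for $x\in Z$, define $d(z,x)$ from the nest as the infimum of the radii of the balls of $\mathcal{C}$ containing $x$ (with a separate reading for the points lying in none of them). The work is to verify (d1)--(d3) for the enlarged distance and to check that $z$ lies in every ball of $\mathcal{C}$; since $\Gamma$ is totally ordered, this reduces to case analysis through the isosceles principle (d3'). The delicate point, and the main obstacle, is \emph{immediateness}: for each $y\in Z$ with $\delta=d(z,y)>0$ one must produce a ball of $\mathcal{C}$ of radius below $\delta$, whose center $y'$ then satisfies $d(z,y')<\delta$; the absence of a smallest ball is exactly what makes this available, and the threshold behavior of $d(z,\cdot)$ must be handled carefully so that no distance piles up to obstruct it.

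For (2) I would not build a fresh object but \emph{carve the completion out of} $\widetilde{X}$, setting
$$X'=\{\,z\in\widetilde{X}:\text{ for every }\gamma\in\Gamma^{\bullet}\text{ there is }x\in X\text{ with }d(x,z)<\gamma\,\}.$$
Then $X$ is dense in $X'$ by definition, and $d(X'\times X')=\Gamma$ since $X\subseteq X'$. To see $X'$ is complete, take a chain of balls in $X'$ with infimum of radii $0$; read as a chain in $\widetilde{X}$ (inclusions being determined by the common distance), it has a point $z$ in its intersection because $\widetilde{X}$ is spherically complete, hence complete. That $z\in X'$ follows from (d3'): given $\gamma$, choose a radius $\gamma_i<\gamma$ and $x\in X$ with $d(x,a_i)<\gamma_i$ for the corresponding center $a_i\in X'$, whence $d(x,z)\le\gamma_i<\gamma$.

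For (3) I would take $\widehat{Y}$ to be the \emph{closure of $Y$ in $X$}, the set of $x\in X$ approximable by $Y$. The coinitiality of $\Gamma_Y^{\bullet}$ in $\Gamma^{\bullet}$ lets one transfer approximations between the two value sets, and the isosceles principle applied to two approximable points shows $d(\widehat{Y}\times\widehat{Y})\subseteq\Gamma_Y$, so $\widehat{Y}$ is genuinely an extension of $Y$ with value set $\Gamma_Y$; completeness of $X$ yields completeness of $\widehat{Y}$ exactly as in (2), and $Y$ is dense in $\widehat{Y}$ by construction. For uniqueness, any completion of $Y$ realized as a subspace of $X$ is contained in $\widehat{Y}$ by density together with coinitiality; conversely, each $x\in\widehat{Y}$ is the common point in $X$ of a nest of $Y$-balls of vanishing radius (nested by Lemma \ref{balls}(1)), and since that nest already lies in the given complete subspace, its intersection point there coincides with $x$ by total ordering of $\Gamma$. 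Hence every such completion equals $\widehat{Y}$.
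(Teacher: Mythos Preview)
Your proposal is correct. The paper does not prove parts (1) and (2) at all---it simply cites \cite{PR97} and \cite{Scho}---so your sketch there already goes beyond what the paper provides. One small caution on the filling lemma: the phrase ``infimum of the radii of the balls of $\mathcal{C}$ containing $x$'' is risky, since $\Gamma$ need not admit infima; the robust definition is $d(z,x)=d(x,a)$ for the center $a$ of any ball of $\mathcal{C}$ \emph{not} containing $x$, which one checks is independent of the choice and lies in $d(Z\times Z)$. Your remark about ``threshold behavior'' suggests you are aware of this.

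For part (3) you take a genuinely different route from the paper. You construct $\widehat{Y}$ explicitly as the closure of $Y$ in $X$ and verify completeness by pushing a vanishing chain of balls into the complete space $X$ and pulling the limit point back via density. The paper instead applies Zorn's Lemma to the family $\mathcal{S}$ of all subspaces $S\subseteq X$ in which $Y$ is dense, takes a maximal element, and shows by contradiction that it is complete: if not, the missing limit point---supplied by completeness of $X$---could be adjoined to enlarge $S$ within $\mathcal{S}$. Your argument is more direct and avoids the appeal to choice for existence; the paper's is marginally shorter since it never needs to name the closure or verify $d(\widehat{Y}\times\widehat{Y})\subseteq\Gamma_Y$ separately (it simply asserts $\Gamma_S^\bullet=\Gamma_Y^\bullet$, which rests on the same isosceles computation you do). The two approaches of course produce the same object, and the uniqueness arguments are essentially identical.
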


\begin{proof}
1), 2): see \cite{PR97} 7.9, 8.11 or \cite{Scho}.

3): Let $\mathcal{S}$ be the set of all ultrametric subspaces $S$\ of
$X$\ such that $Y$\ is dense in $S$. The set $\mathcal{S}$\ is ordered by
inclusion. Let $\{S_{i}\mid i\in I\}$ be a totally ordered subset of
$\mathcal{S}$. Then $S=\underset{i\in I}{%
{\textstyle\bigcup}
}S_{i}$ is a subspace of $X$\ and $Y$ is dense in $S$. Thus $S\in\mathcal{S}$
is an upper bound for all $S_{i}$, $i\in I$. By Zorn's Lemma, there exists a
maximal element in $\mathcal{S}$\ which we denote again by $S$.

We show that $S$\ is complete. Since $\Gamma_{Y}^{\bullet}$ is coinitial in
$\Gamma^{\bullet}$ and $\Gamma_{Y}^{\bullet}=\Gamma_{S}^{\bullet}=d(S\times
S)\setminus\{0\}$, we have that $\Gamma_{S}^{\bullet}$ is coinitial in
$\Gamma^{\bullet}$. Therefore, a set $\Delta\subseteq d(S\times S)\setminus
\{0\}$ has in $\Gamma_{S}^{\bullet}$ the infimum $0$ if and only if the
infimum of $\Delta$ in $\Gamma^{\bullet}$\ is $0$, thus we may just write
$\inf\Delta=0$. We assume that $S$\ is not complete. Then there exists a chain
$\{B_{\gamma_{i}}^{S}(a_{i})\mid i\in I\}$ of balls in $S$\ with $\inf
\{\gamma_{i}\mid i\in I\}=0$ and $%
{\textstyle\bigcap}
B_{\gamma_{i}}^{S}(a_{i})=\emptyset$. Since $X$\ is complete and for each
$i\in I$, $B_{\gamma_{i}}^{S}(a_{i})=S\cap B_{\gamma_{i}}(a_{i})$, where
$B_{\gamma_{i}}(a_{i})$ denotes the ball with center $a_{i}$ and radius
$\gamma_{i}$ in $X$, there exists $z\in X$ such that $\{z\}=%
{\textstyle\bigcap}
B_{\gamma_{i}}(a_{i})$. Let $S^{\prime}=S\cup\{z\}$. Then $S^{\prime}$\ is a
subspace of $X$\ which properly contains $S$, so also $Y$. To prove that
$Y$\ is dense in $S^{\prime}$, it suffices to show that if $0<\gamma\in\Gamma$
there exists $y\in Y$ such that $d(y,z)<\gamma$. Since $\inf\{\gamma_{i}\mid
i\in I\}=0$ there exists $\gamma_{i}$\ with $0<\gamma_{i}<\gamma$. From $Y$
dense in $S$\ and $a_{i}\in S$, it follows that there exists $y\in Y$ such
that $d(y,a_{i})<\gamma_{i}$. Since moreover $z\in B_{\gamma_{i}}(a_{i})$ then
$d(z,y)\leq\max\{d(z,a_{i}),d(y,a_{i})\}\leq\gamma_{i}<\gamma$. Thus $Y$ is
dense in $S^{\prime}$. So $S^{\prime}\in\mathcal{S}$, which contradicts the
maximality of $S$\ in $\mathcal{S}$. We have proved that $S$\ is complete,
hence a completion of $Y$\ in $X$.

It remains to show that $Y$\ has at most one completion in $X$. Assume that
$\widehat{Y}_{1}$, $\widehat{Y}_{2}$ are completions of $Y$\ in $X$. Let
$\widehat{y}_{1}\in\widehat{Y}_{1}$. For each $\gamma\in\Gamma^{\bullet}$
there exists $y_{\gamma}\in Y$ such that $d(\widehat{y}_{1},y_{\gamma}%
)<\gamma$. Since $\widehat{Y}_{2}$\ is complete and $\inf\{\gamma\mid\gamma
\in\Gamma^{\bullet}\}=0$ there exists $\widehat{y}_{2}\in\widehat{Y}_{2}$ with
$\{\widehat{y}_{2}\}=\underset{\gamma\in\Gamma^{\bullet}}{%
{\textstyle\bigcap}
}B_{\gamma}(y_{\gamma})$. Hence $\widehat{y}_{1}=\widehat{y}_{2}$, which shows
that $\widehat{Y}_{1}\subseteq\widehat{Y}_{2}$. By the same argumentation, we
conclude that $\widehat{Y}_{2}\subseteq\widehat{Y}_{1}$, thus $\widehat{Y}%
_{1}=\widehat{Y}_{2}$.
\end{proof}

\textit{For the rest of this subsection, we assume that }$\Gamma$\textit{\ is
totally ordered.}

Let $(X,d,\Gamma)$ be an ultrametric space. Let $\lambda$ be a limit ordinal,
let $\xi=(x_{\iota})_{\iota<\lambda}$ be a family of elements of $X$. We say
that $\xi$ is a \textit{Cauchy family\/} if for every $\gamma\in
\Gamma^{\bullet}$ there exists $\iota_{0}=\iota_{0}(\gamma,\xi)<\lambda$ such
that if $\iota_{0}\leq\iota<\kappa<\lambda$ then $d(x_{\iota},x_{\kappa
})<\gamma$.

The family $\xi=(x_{\iota})_{\iota<\lambda}$ is said to be
\textit{pseudo-convergent\/} if there exists $\iota_{0}=\iota_{0}(\xi
)<\lambda$ such that if $\iota_{0}\leq\iota<\kappa<\mu<\lambda$ then
$d(x_{\kappa},x_{\mu})<d(x_{\iota},x_{\kappa})$. We note that if
$\xi=(x_{\iota})_{\iota<\lambda}$ is pseudo-convergent, the elements
$x_{\iota}$\thinspace, for $\iota_{0}(\xi)\leq\iota<\lambda$, are all distinct
and if $\iota_{0}(\xi)\leq\iota<\kappa<\mu<\lambda$ then $d(x_{\iota
},x_{\kappa})=d(x_{\iota},x_{\mu})$; this element is denoted by $\xi_{\iota}%
$\thinspace. Hence if $\iota_{0}\leq\iota<\kappa<\lambda$ then $\xi_{\iota
}>\xi_{\kappa}$\thinspace.

The element $y\in X$ is a \textit{limit\/} of the Cauchy family $\xi$ if for
every $\gamma\in\Gamma^{\bullet}$ there exists $\iota_{1}=\iota_{1}%
(\gamma)<\lambda$, such that if $\iota_{1}\leq\iota<\lambda$ then
$d(y,x_{\iota})<\gamma$. A Cauchy family has at most one limit. Indeed, if
$y$, $z$ are limits, then $d(y,z)<\gamma$ for all $\gamma\in\Gamma^{\bullet}$,
so $y=z$. The ultrametric space $X$\ is complete if and only if every Cauchy
family has a limit in $X$.

The element $y\in X$ is a \textit{pseudo-limit\/} of the pseudo-convergent
family $\xi=(x_{\iota})_{\iota<\lambda}$ if there exists $\iota_{1}=\iota
_{1}(\xi,y)$, $\iota_{0}(\xi)\leq\iota_{1}<\lambda$, such that if $\iota
_{1}\leq\iota<\lambda$ then $d(y,x_{\iota})\leq\xi_{\iota}$\thinspace. If $y$
is a pseudo-limit of $\xi$ then $z\in X$ is a pseudo-limit of $\xi$ if and
only if $d(y,z)<\xi_{\iota}$ for all $\iota$ such that $\iota_{1}\leq
\iota<\lambda$. The ultrametric space $X$ is spherically complete if and only
if every pseudo-convergent family of $X$\ has a pseudolimit in $X$ (see
\cite{Ba}).

\subsection{Contracting mappings}

Let $(X,d,\Gamma)$ be an ultrametric space. A mapping $\varphi:X\rightarrow X$
is said to be \textit{strictly contracting}, if for all $x,x^{\prime}\in X$,
with $x\neq x^{\prime}$, $d(\varphi(x),\varphi(x^{\prime}))<d(x,x^{\prime})$.
An element $z\in X$ with $\varphi(z)=z$ is called a \textit{fixed point} of
$\varphi$. In \cite{UD} (see also \cite{Ba}, \cite{FP}), we proved the
following Fixed Point Theorem:

\begin{theorem}
\label{FPTh}Assume that $(X,d,\Gamma)$ is principally complete and that
$\varphi:X\rightarrow X$ is strictly contracting. Then $\varphi$ has exactly
one fixed point $z\in X$.
\end{theorem}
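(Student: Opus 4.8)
The plan is to dispatch uniqueness immediately and then obtain existence by a maximal-chain argument driven by principal completeness. For uniqueness, if $z$ and $z'$ were distinct fixed points, then strict contraction would give $d(z,z')=d(\varphi(z),\varphi(z'))<d(z,z')$, which is absurd; hence $\varphi$ has at most one fixed point.

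For existence I would argue by contradiction, assuming $\varphi(x)\neq x$ for every $x\in X$, so that the principal ball $B_x:=B(x,\varphi(x))=B_{d(x,\varphi(x))}(x)$ is defined for all $x$. The engine of the proof is a ``shrinking'' lemma: for each $x$, writing $x'=\varphi(x)$, strict contraction gives $d(x',\varphi(x'))=d(\varphi(x),\varphi(x'))<d(x,x')=d(x,\varphi(x))$, while $x'=\varphi(x)\in B_x$; hence by Lemma \ref{balls}(2a) one gets $B_{x'}\subseteq B_x$, and the strict inequality of radii together with Lemma \ref{balls}(2c) makes the inclusion proper, $B_{\varphi(x)}\subsetneq B_x$.

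Next I would apply Zorn's Lemma to the family of all chains contained in $\{B_x\mid x\in X\}$, ordered by inclusion (the union of a totally ordered family of such chains is again a chain), to obtain a maximal chain $\mathcal{K}$. Since $\mathcal{K}$ is a chain of principal balls and $X$ is principally complete, there is some $z\in\bigcap\mathcal{K}$. The key computation is then to show $B_z\subseteq B_x$ for every $B_x\in\mathcal{K}$: from $z\in B_x$ we get $d(z,x)\leq d(x,\varphi(x))$, hence $d(z,\varphi(x))\leq d(x,\varphi(x))$ by (d3), while $d(\varphi(x),\varphi(z))<d(x,z)\leq d(x,\varphi(x))$; feeding these into (d3) once more yields $d(z,\varphi(z))\leq d(x,\varphi(x))$, whence $B_z\subseteq B_x$ by Lemma \ref{balls}(2a). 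Consequently $\mathcal{K}\cup\{B_z\}$ is a chain, so maximality forces $B_z\in\mathcal{K}$, and $B_z$ is then the least element of $\mathcal{K}$.

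The contradiction comes from one further application of $\varphi$: the shrinking lemma gives $B_{\varphi(z)}\subsetneq B_z$, and since $B_{\varphi(z)}\subseteq B_z\subseteq B_x$ for all $B_x\in\mathcal{K}$, the set $\mathcal{K}\cup\{B_{\varphi(z)}\}$ is again a chain, forcing $B_{\varphi(z)}\in\mathcal{K}$ by maximality; but $B_z$ is the least element of $\mathcal{K}$, so $B_z\subseteq B_{\varphi(z)}$, contradicting $B_{\varphi(z)}\subsetneq B_z$. Hence $\varphi$ must have a fixed point, which by the first paragraph is unique. I expect the main obstacle to be the ultrametric estimate bounding $d(z,\varphi(z))$ at the intersection point $z$ — precisely where the hypotheses of total order or solidity are deliberately unavailable, so that only (d3) may be invoked — together with the bookkeeping required to convert maximality of $\mathcal{K}$ into a genuine contradiction; the remaining steps are routine.
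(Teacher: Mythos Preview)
Your proof is correct. Note, however, that the present paper does not itself prove Theorem~\ref{FPTh}: it merely records the statement and refers to the authors' earlier work \cite{UD}, \cite{Ba}, \cite{FP} for the argument. The maximal-chain-of-principal-balls approach you give---assume no fixed point, form $B_x=B_{d(x,\varphi(x))}(x)$, take a maximal chain in $\{B_x\mid x\in X\}$ via Zorn, pick $z$ in its intersection by principal completeness, show $B_z$ lies below every member of the chain, and contradict maximality via $B_{\varphi(z)}\subsetneq B_z$---is precisely the classical proof introduced in \cite{Ba} and reused in the later references, so there is nothing substantive to contrast. The step you rightly flagged as delicate, namely deducing $d(z,\varphi(z))\le d(x,\varphi(x))$ from $z\in B_x$ using only axiom~(d3) (no total order on $\Gamma$, no solidity), is carried out correctly in your write-up.
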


If the ultrametric space $(Y,d,\Gamma)$ is not spherically complete and
$\varphi:Y\rightarrow Y$ is strictly contracting the following result
guarantees an appropriate extension of $\varphi$ for the case that $\Gamma
$\ is totally ordered.

\begin{theorem}
\label{mapext}Assume that $\Gamma$\ is totally ordered and that $(X,d,\Gamma)$
is spherically complete. Let $Y$\ be a subspace of $X$. If $\psi:Y\rightarrow
Y$ is strictly contracting, there exists $\varphi:X\rightarrow X$ such that
$\varphi$\ is strictly contracting and extends $\psi$. If moreover, $d(Y\times
Y)\setminus\{0\}$ is coinitial in $d(X\times X)\setminus\{0\}$, the
restriction $\varphi_{\mid\widehat{Y}}$ of $\varphi$\ to the completion
$\widehat{Y}$ of $Y$\ in $X$\ is uniquely determined.
\end{theorem}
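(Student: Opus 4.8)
The plan is to establish the existence statement by a Zorn's Lemma argument that enlarges the domain one point at a time, and the uniqueness statement by a density-and-continuity argument on $\widehat{Y}$.

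For existence, I would consider the set $\mathcal{E}$ of all strictly contracting maps $\chi:Z\rightarrow X$, where $Z$ is a subspace with $Y\subseteq Z\subseteq X$ and $\chi$ extends $\psi$, ordered by extension. The union over a chain in $\mathcal{E}$ is again such a map, since any two points of its domain already lie in a common member of the chain; so Zorn's Lemma yields a maximal element $\varphi:Z\rightarrow X$. I claim $Z=X$, which gives the desired $\varphi:X\rightarrow X$. Otherwise, fixing $x\in X\setminus Z$, it suffices to choose a value $w:=\varphi(x)\in X$ with
\[ d(w,\varphi(z))<d(x,z)\qquad\text{for all }z\in Z, \]
for then the extension of $\varphi$ to $Z\cup\{x\}$ is strictly contracting, contradicting maximality.

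The heart of the matter is producing this $w$. Writing $r_{z}:=d(x,z)\in\Gamma^{\bullet}$, I would first check that the closed balls $B_{r_{z}}(\varphi(z))$ form a chain: if $r_{z}\leq r_{z'}$, then strict contraction together with (d3') gives $d(\varphi(z),\varphi(z'))<d(z,z')\leq\max\{r_{z},r_{z'}\}=r_{z'}$, so that $\varphi(z)\in B_{r_{z}}(\varphi(z))\cap B_{r_{z'}}(\varphi(z'))$ and Lemma \ref{balls}(1)(a) yields $B_{r_{z}}(\varphi(z))\subseteq B_{r_{z'}}(\varphi(z'))$. The main obstacle is that spherical completeness only produces a $w$ with $d(w,\varphi(z))\leq r_{z}$, whereas strict contraction demands strict inequality. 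I would resolve this by a dichotomy on the family $\{r_{z}\}_{z\in Z}$. If it has a least element $r_{z^{*}}$, take $w:=\varphi(z^{*})$; then for $z\neq z^{*}$ one has $d(w,\varphi(z))<d(z^{*},z)\leq\max\{r_{z^{*}},r_{z}\}=r_{z}$, and the case $z=z^{*}$ is trivial. If $\{r_{z}\}$ has no least element, take any $w$ in the non-empty intersection $\bigcap_{z}B_{r_{z}}(\varphi(z))$ furnished by spherical completeness; then, for each fixed $z$, I would pick $z'$ with $r_{z'}<r_{z}$, so that $d(w,\varphi(z'))\leq r_{z'}<r_{z}$ and $d(\varphi(z'),\varphi(z))<d(z',z)\leq r_{z}$ force $d(w,\varphi(z))<r_{z}$ by the ultrametric inequality. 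This is the step I expect to require the most care.

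For the uniqueness assertion, the coinitiality hypothesis together with completeness of $X$ (spherical completeness implies completeness) lets Theorem \ref{compl}(3) furnish the unique completion $\widehat{Y}$ of $Y$ inside $X$. Let $\varphi$ be any strictly contracting extension of $\psi$ and let $\widehat{y}\in\widehat{Y}$. Since $Y$ is dense in $\widehat{Y}$, for each $\gamma\in\Gamma^{\bullet}$ I would choose $y_{\gamma}\in Y$ with $d(\widehat{y},y_{\gamma})<\gamma$, obtaining a Cauchy family with limit $\widehat{y}$. Because $\varphi$ is in particular non-expanding, $d(\varphi(\widehat{y}),\psi(y_{\gamma}))=d(\varphi(\widehat{y}),\varphi(y_{\gamma}))\leq d(\widehat{y},y_{\gamma})<\gamma$, so $\varphi(\widehat{y})$ is a limit of the Cauchy family $(\psi(y_{\gamma}))$. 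As a Cauchy family has at most one limit, $\varphi(\widehat{y})$ is determined solely by $\psi$ and the approximating family $(y_{\gamma})$, data independent of the chosen extension. Hence any two strictly contracting extensions of $\psi$ coincide on all of $\widehat{Y}$, which is the assertion.
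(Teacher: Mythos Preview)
Your argument is correct. For the existence half, the paper does not actually give a proof but simply cites \cite{P05}; your Zorn's Lemma argument enlarging the domain one point at a time, with the dichotomy on whether $\{r_z\}_{z\in Z}$ attains its infimum, is the standard route and is almost certainly what that reference contains. The key step---turning the weak inequality $d(w,\varphi(z))\leq r_z$ coming from spherical completeness into the strict inequality required for contraction by passing through some $z'$ with $r_{z'}<r_z$---is handled correctly.

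For the uniqueness half, your direct argument and the paper's contradiction argument are two phrasings of the same idea. You show that $\varphi(\widehat{y})$ is forced to be the (necessarily unique) limit of the net $(\psi(y_\gamma))_\gamma$, which depends only on $\psi$; the paper instead supposes $\varphi_1(\widehat{y})\neq\varphi_2(\widehat{y})$, sets $\gamma=d(\varphi_1(\widehat{y}),\varphi_2(\widehat{y}))$, picks a single $y\in Y$ with $d(y,\widehat{y})<\gamma$, and derives $\gamma<\gamma$ from $d(\psi(y),\varphi_i(\widehat{y}))<d(y,\widehat{y})$. The paper's version is marginally slicker in that it needs only one approximating point rather than a whole family indexed by $\Gamma^\bullet$ (so no appeal to the notion of limit or to indexing conventions is needed), but the content is identical.
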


\begin{proof}
The existence of $\varphi$\ is proved in \cite{P05}.

We show that $\varphi$\ is uniquely determined on $\widehat{Y}$. Let
$\varphi_{1}$, $\varphi_{2}$ be extensions of $\psi$\ to strictly contracting
mappings from $X$ to $X$, and assume that $\varphi_{1\mid\widehat{Y}}%
\neq\varphi_{2\mid\widehat{Y}}$. Then there exists $\widehat{y}\in\widehat
{Y}\setminus Y$ such that $\varphi_{1}(\widehat{y})\neq\varphi_{2}(\widehat
{y})$. So $0<\gamma=d(\varphi_{1}(\widehat{y}),\varphi_{2}(\widehat{y}))$.
Since $Y$\ is dense in $\widehat{Y}$\ there exists $y\in Y$ such that
$d(y,\widehat{y})<\gamma$. Hence $d(\psi(y),\varphi_{1}(\widehat
{y}))=d(\varphi_{1}(y),\varphi_{1}(\widehat{y}))<d(y,\widehat{y})$ and
$d(\psi(y),\varphi_{2}(\widehat{y}))=d(\varphi_{2}(y),\varphi_{2}(\widehat
{y}))<d(y,\widehat{y})$, thus $\gamma=d(\varphi_{1}(\widehat{y}),\varphi
_{2}(\widehat{y}))\leq\max\{d(\psi(y),\varphi_{1}(\widehat{y})),d(\psi
(y),\varphi_{2}(\widehat{y}))\}<d(y,\widehat{y})<\gamma$, which is a
contradiction. Therefore $\varphi_{1\mid\widehat{Y}}=\varphi_{2\mid\widehat
{Y}}$.
\end{proof}

\section{The Process of Approximation}

Our purpose is now to indicate how to reach or to approximate a fixed point.

Let $(X,d,\Gamma)$\ be a principally complete ultrametric space. We shall
assume that $\Gamma^{\bullet}$\ does not have a smallest element. To exclude
the trivial case, we also assume that $X$\ has at least two elements.

Let $\varphi:X\rightarrow X$ be a strictly contracting mapping, so by Theorem
\ref{FPTh}, $\varphi$ has a unique fixed point, which we denote by $z$.

\subsection{The Approximation Theorem}

We shall work with families of elements of $X$. If $\lambda$ is an ordinal
number, let $l(\lambda)$ denote the set of ordinal numbers $\mu<\lambda$. As
it is known, $\lambda$ may be identified with $l(\lambda)$\ and, by
definition, the cardinal of $\lambda$ is $card\;\lambda=card\;l(\lambda)$. Let
$\kappa$ be a limit ordinal with $card\;\kappa>card\;\Gamma$. For every
ordinal $\lambda$ such that $\lambda<\kappa$, let $\mathcal{P}_{\lambda}$ be
the set of all families $\alpha=(a_{\iota})_{\iota<\lambda}\in X^{l(\lambda)}$
which satisfy the following conditions:

\begin{description}
\item[i)] if $\iota+1<\lambda$ then $a_{\iota+1}=\varphi(a_{\iota})\neq
a_{\iota}$;

\item[ii)] $(d(a_{\iota},a_{\iota+1}))_{\iota+1<\lambda}$ is strictly decreasing;

\item[iii)] if $\mu$ is a limit ordinal, $\mu<\lambda$, then $d(a_{\mu
},a_{\iota})\leq d(a_{\iota},a_{\iota+1})$ for all $\iota<\mu$.
\end{description}

If $\lambda=1$, $\mathcal{P}_{1}$\ is naturally identified with $X$, so
$\mathcal{P}_{1}\neq\emptyset$. Let $\mathcal{P}$ be the union of the sets
$\mathcal{P}_{\lambda}$\ for $\lambda<\kappa$. If $y\in X$ let $\mathcal{P}%
_{y}$ be the set of families in $\mathcal{P}$\ with $a_{0}=y$.

We say that $\alpha=(a_{\iota})_{\iota<\lambda}$ \textit{reaches} $z$\ if
there exists $\iota_{0}<\lambda$ such that $a_{\iota_{0}}=z$. It follows that
$\varphi(a_{\iota_{0}})=z=a_{\iota_{0}}$, hence by (i) $\iota_{0}+1=\lambda$.
Thus $\lambda$ is not a limit ordinal.

Let $\lambda$ be a limit ordinal, let $\alpha=(a_{\iota})_{\iota<\lambda}%
\in\mathcal{P}$\ and for every $\iota<\lambda$ let $B_{\iota}=B_{\iota}%
(\alpha)=B_{d(a_{\iota},\varphi(a_{\iota}))}(a_{\iota})$, so by (i),
$B_{\iota}$\ is a principal ball. By Lemma \ref{balls}, we have $B_{\iota
+1}\subseteq B_{\iota}$ and, in fact, $B_{\iota+1}\subset B_{\iota}$, because
$d(a_{\iota},a_{\iota+1})>d(a_{\iota+1},a_{\iota+2})$, so $a_{\iota}\notin
B_{\iota+1}$. For every limit ordinal $\mu\leq\lambda$ let $I_{\mu}%
(\alpha)=\underset{\iota<\mu}{%
{\textstyle\bigcap}
}B_{\iota}(\alpha)$. Since $X$\ is principally complete then $I_{\mu}%
(\alpha)\neq\emptyset$. We say that $\alpha=(a_{\iota})_{\iota<\lambda}%
\in\mathcal{P}$\ is an \textit{asymptotic approximation} to $z$ (or more
simply, an \textit{approximation} to $z$) if $\lambda$ is a limit ordinal and
$I_{\lambda}(\alpha)=\underset{\iota<\lambda}{%
{\textstyle\bigcap}
}B_{\iota}(\alpha)=\{z\}$. We note that an approximation to $z$\ does not
reach $z$, because $\lambda$\ is a limit ordinal.\medskip

\medskip The next result will be called the \textit{Approximation Theorem}.

\begin{theorem}
\label{Appr}\textit{Let $X$ be principally complete, let $Y\subseteq X$,
$Y\neq\emptyset$. Assume that $z$ cannot be reached by any }$\alpha
\in\mathcal{P}$\textit{\ such that }$a_{0}\in Y\setminus\{z\}$\textit{. Then
for every }$y\in Y\setminus\{z\}$\textit{\ there exists an asymptotic
approximation }$\alpha=(a_{\iota})_{\iota<\lambda}$\textit{\ to }%
$z$\textit{\ such that }$a_{0}=y$\textit{.}
\end{theorem}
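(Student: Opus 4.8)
The plan is to construct the approximating family $\alpha$ directly, by transfinite recursion started at $a_0=y$, and then to show that the recursion is forced to halt at a limit ordinal $\lambda<\kappa$ with $I_\lambda(\alpha)=\{z\}$. At a successor stage I would put $a_{\iota+1}=\varphi(a_\iota)$; this is legitimate precisely because the non-reaching hypothesis guarantees $a_\iota\neq z$, so $\varphi(a_\iota)\neq a_\iota$ and condition (i) holds. At a limit stage $\mu$ I would use principal completeness to choose $a_\mu$ in the nonempty set $I_\mu(\alpha)=\bigcap_{\iota<\mu}B_\iota(\alpha)$; by the definition of the balls $B_\iota$ any such choice satisfies condition (iii) automatically.

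The delicate point, which I expect to be the main obstacle, is to check that condition (ii) (strict decrease of the radii $r_\iota=d(a_\iota,\varphi(a_\iota))$) survives the limit stages, where $a_\mu$ is an essentially arbitrary element of $I_\mu(\alpha)$. At a successor stage strict contraction gives $r_{\iota+1}=d(\varphi(a_\iota),\varphi(a_{\iota+1}))<r_\iota$ at once. For a limit $\mu$ and any $\iota<\mu$ I would estimate $r_\mu=d(a_\mu,\varphi(a_\mu))$ by the ultrametric inequality via the intermediate point $a_{\iota+1}=\varphi(a_\iota)$: the term $d(a_{\iota+1},\varphi(a_\mu))=d(\varphi(a_\iota),\varphi(a_\mu))<d(a_\iota,a_\mu)\leq r_\iota$ by (iii), while $d(a_\mu,a_{\iota+1})\leq r_{\iota+1}<r_\iota$ again by (iii) applied at $\iota+1<\mu$; here $a_\mu\neq a_\iota$ since $a_\iota\notin B_{\iota+1}$ whereas $a_\mu\in B_{\iota+1}$. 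Hence $r_\mu<r_\iota$ for every $\iota<\mu$, so the radii form a strictly decreasing family over the whole index set. The same computation yields $d(z,a_\iota)=r_\iota$, so $z\in B_\iota(\alpha)$ for every $\iota$ and therefore $z\in I_\mu(\alpha)$ at every limit stage.

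It then remains to locate the stopping point. A successor stage never blocks the recursion, and moreover $a_{\iota+1}\neq z$, for otherwise the initial segment of length $\iota+2<\kappa$ would be a family in $\mathcal{P}$ with $a_0\in Y\setminus\{z\}$ reaching $z$, contradicting the hypothesis. Thus the recursion can only be blocked at a limit $\mu$ at which the only available point is $z$ itself, that is, where $I_\mu(\alpha)=\{z\}$. To force such a stage below $\kappa$ I would argue by cardinality: if the construction ran through all $\iota<\kappa$, the strictly decreasing family $(r_\iota)_{\iota<\kappa}$ would be an injection of $l(\kappa)$ into $\Gamma$, giving $card\;\kappa\leq card\;\Gamma$ and contradicting $card\;\kappa>card\;\Gamma$. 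Consequently the recursion halts at some limit $\lambda<\kappa$ with $I_\lambda(\alpha)=\{z\}$; the resulting family $\alpha=(a_\iota)_{\iota<\lambda}$ lies in $\mathcal{P}_\lambda$, has $a_0=y$, and is by construction an asymptotic approximation to $z$.
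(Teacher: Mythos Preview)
Your argument is correct and is essentially the paper's proof reorganised: the paper packages the transfinite construction as a Zorn's-Lemma argument on $\mathcal{P}_y$ (the cardinality estimate $card\,\mu\le card\,\Gamma<card\,\kappa$ appears there inside the check that chains have upper bounds), whereas you run the recursion explicitly and invoke the same cardinality bound at the end to force a halt.

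Two small points of care, both tied to the fact that $\Gamma$ is only assumed partially ordered here. First, the ``ultrametric inequality'' available is the weak form (d3), so from your two estimates $d(a_\mu,a_{\iota+1})\le r_{\iota+1}$ and $d(a_{\iota+1},\varphi(a_\mu))<r_\iota$ you can conclude directly only $r_\mu\le r_\iota$; the strict inequality then follows because the same bound with $\iota+1$ in place of $\iota$ gives $r_\mu\le r_{\iota+1}<r_\iota$ (this is exactly how the paper obtains strictness). Second, the equality $d(z,a_\iota)=r_\iota$ is an isoceles-triangle fact that need not hold when $\Gamma$ is not totally ordered; fortunately you do not need it, since when the recursion halts at a limit $\lambda$ you have $I_\lambda(\alpha)\setminus\{z\}=\emptyset$ by construction and $I_\lambda(\alpha)\neq\emptyset$ by principal completeness, whence $I_\lambda(\alpha)=\{z\}$.
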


\begin{proof}
The proof requires some preliminary considerations about the set $\mathcal{P}
$.

1) The order relation on $\mathcal{P}$.

Let $\alpha=(a_{\iota})_{\iota<\lambda}$\ and $\alpha^{\prime}=(a_{\iota
}^{\prime})_{\iota<\lambda}$\ be families in $\mathcal{P}$. We define
$\alpha\leq\alpha^{\prime}$ when $\lambda\leq\lambda^{\prime}$ and $a_{\iota
}^{\prime}=a_{\iota}$ for all $\iota<\lambda$. It is immediate to verify that
$\leq$\ is an order relation. Moreover, for every $\lambda$, the order
restricted to $\mathcal{P}_{\lambda}$\ is trivial.

2) Let $y\in Y$, $y\neq z$. The ordered set $\mathcal{P}_{y}$\ is inductive.

Let $C$\ be a non-empty set, for every $c\in C$ let $\alpha^{c}=(a_{\iota}%
^{c})_{\iota<\lambda_{c}}\in\mathcal{P}_{y}$, assume that if $c\neq c^{\prime
}$ then $\alpha^{c}\neq\alpha^{c^{\prime}}$ and that the set $A=\{\alpha
^{c}\mid c\in C\}$ is a totally ordered subset of $\mathcal{P}_{y} $. It
follows that if $\alpha^{c}<\alpha^{c^{\prime}}$\ then $\lambda_{c}%
<\lambda_{c^{\prime}}$. We recall that from $\alpha^{c}\in\mathcal{P}_{y}$ it
follows that $\lambda_{c}<\kappa$. We consider two cases.

a) $L=\{\lambda_{c}\mid c\in C\}$ has a largest element $\lambda_{c_{1}}$.
Then $\alpha^{c}\leq\alpha^{c_{1}}$ for every $c\in C$, otherwise there exists
$c_{2}\in C$ such that $\alpha^{c_{1}}<\alpha^{c_{2}}$, hence $\lambda_{c_{1}%
}<\lambda_{c_{2}}$, which is a contradiction. In this case, $\alpha^{c_{1}}%
$\ is an upper bound for $A$.

b) $L$ does not have a largest element. Since $\lambda_{c}<\kappa$ for every
$c\in C$, there exists the smallest element $\mu$ such that $\lambda_{c}<\mu$
for every $c\in C$. So $\mu\leq\kappa$.

If $\mu=\nu+1$ then by the minimality of $\mu$, there exists $c_{1}\in C$ such
that $\nu\leq\lambda_{c_{1}}$ and therefore $\nu=\lambda_{c_{1}}$, because
$\mu>\lambda_{c_{1}}$. In this case, $\lambda_{c_{1}}$ is the largest element
in $L$, which has been excluded.

We have shown that $\mu$\ is a limit ordinal. Now if $\iota$\ is an ordinal
such that $\iota<\mu$, by the minimality of $\mu$\ there exists $\lambda
_{c}\in L$ such that $\iota\leq\lambda_{c}<\mu$. Since $L$\ does not have a
largest element, there exists $c^{\ast}\in C$ such that $\iota\leq\lambda
_{c}<\lambda_{c^{\ast}}<\mu$. We define $\widetilde{a_{\iota}}=a_{\iota
}^{c^{\ast}}$. It is immediate to verify that $\widetilde{a_{\iota}}$\ is
well-defined, independently of the choice of $c^{\ast}\in C $ such that
$\lambda_{c}<\lambda_{c^{\ast}}<\mu$. By (ii), the family $(d(a_{\iota
}^{c^{\ast}},a_{\iota+1}^{c^{\ast}}))_{\iota<\mu}$ of elements of $\Gamma$\ is
strictly decreasing, hence all these elements are pairwise distinct. So
$card\;\mu\leq\Gamma$. Since $card\;\Gamma<card\;\kappa$ thus $\mu<\kappa$,
which implies that $\widetilde{\alpha}=(\widetilde{a_{\iota}})_{\iota<\mu}%
$\ belongs to $\mathcal{P}$. Furthermore, $\alpha^{c}<\widetilde{\alpha}$ for
every $c\in C$. Hence $\widetilde{\alpha}$\ is an upper bound for $A$. This
concludes the proof that $\mathcal{P}_{y}$\ is inductive.

By Zorn's Lemma, there exists a maximal $\alpha\in\mathcal{P}_{y}$. That is,
for every $y\in Y\setminus\{z\}$ there exists a maximal $\alpha\in\mathcal{P}$
such that $a_{0}=y$.

3) Proof of the theorem.

We assume that $z$\ is not reached by any family in $\mathcal{P}_{y}$\ for
every $y\in Y\setminus\{z\}$. By (2), for every $y\in Y\setminus\{z\}$ there
exists a maximal $\alpha=(a_{\iota})_{\iota<\lambda}\in\mathcal{P} $ such that
$a_{0}=y$. First we observe that $\lambda$\ is a limit ordinal. We assume the
contrary, let $\lambda=\iota_{0}+1$. Since z\ is not reached by $\alpha$\ then
$a_{\iota_{0}}\neq z$, so $a_{\iota_{0}}\neq\varphi(a_{\iota_{0}})$, hence
$d(\varphi(a_{\iota_{0}}),\varphi^{2}(a_{\iota_{0}}))<d(a_{\iota_{0}}%
,\varphi(a_{\iota_{0}}))$. Let $\alpha^{\prime}=(a_{\iota}^{\prime}%
)_{\iota<\lambda+1}$ where $a_{\iota}^{\prime}=a_{\iota}$ for all
$\iota<\lambda$ and $a_{\lambda}^{\prime}=\varphi(a_{\iota_{0}})$. So
$\alpha^{\prime}\in\mathcal{P}$, $\alpha<\alpha^{\prime}$. This is impossible,
because $\alpha$\ is maximal in $\mathcal{P}$. Thus, as stated, $\lambda$ is a
limit ordinal.

Since $X$ is principally complete, and each $B_{\iota}(\alpha)$ is a principal
ball of $X$, then $I_{\lambda}(\alpha)=\underset{\iota<\lambda}{%
{\textstyle\bigcap}
}B_{\iota}(\alpha)\neq\emptyset$. We show that $I_{\lambda}(\alpha)=\{z\}$.
Let $t\in I_{\lambda}(\alpha)$. We note that $t\neq a_{\iota}$ for all
$\iota<\lambda$. Indeed, if there exists $\iota_{0}<\lambda$ such that
$t=a_{\iota_{0}}$, then $t\notin B_{\iota_{0}+1}$ which is a contradiction.
Now we show that $\varphi(t)\in I_{\lambda}(\alpha)$. We have $d(\varphi
(t),a_{\iota+1})=d(\varphi(t),\varphi(a_{\iota}))<d(t,a_{\iota})\leq
d(a_{\iota},a_{\iota+1})$ for all $\iota<\lambda$. It follows that
$d(t,\varphi(t))\leq d(a_{\iota},a_{\iota+1})$ for every $\iota<\lambda$.
Hence $d(t,\varphi(t))<d(a_{\iota},a_{\iota+1})$ for every $\iota<\lambda$.
Let $\alpha^{\prime}=(a_{\iota}^{\prime})_{\iota<\lambda+1}$ be defined by
$a_{\iota}^{\prime}=a_{\iota}$ for all $\iota<\lambda$ and $a_{\lambda
}^{\prime}=t$. So $\alpha^{\prime}\in\mathcal{P}$, because $d(t,a_{\iota})\leq
d(a_{\iota},a_{\iota+1})$ for every $\iota<\lambda$. We have $\alpha
<\alpha^{\prime}$, which is contrary to the maximality of $\alpha$. This shows
that $t=\varphi(t)$, so $t=z$ and we deduce that $I_{\lambda}(\alpha)=\{z\}$.
Hence $\alpha$\ is an asymptotic approximation to $z$.
\end{proof}

\smallskip

Under the assumptions of the Approximation Theorem, if $y\in Y\setminus\{z\} $
there exists the smallest limit ordinal $\lambda$\ for which there exists an
approximation $\alpha=(a_{\iota})_{\iota<\lambda}$ to $z$\ such that $a_{0}%
=y$. So the set $\mathcal{M}=\{\alpha=(a_{\iota})_{\iota<\lambda}\mid\alpha$
is an approximation to $z$ and $a_{0}=y\}$ is not empty. For every $\alpha
\in\mathcal{M}$ and each limit ordinal $\mu<\lambda$ the set $I_{\mu}(\alpha
)$\ contains properly $z$. The set $I_{\mu}(\alpha)$\ may be considered to be
a measure of the accuracy of the approximation $\alpha$, when restricted to
$\alpha_{\mid\mu}=(a_{\iota})_{\iota<\mu}$.

\begin{corollary}
\label{CorA}Let $X$\ be principally complete. \textit{If }$y\in X$\textit{,
}$y\neq z$\textit{\ , then either there exists }$\alpha\in\mathcal{P}_{y}%
$\textit{\ which reaches }$z$\textit{, or if this is not the case, there
exists an approximation }$\alpha\in\mathcal{P}_{y}$ \textit{to }$z$\textit{.}
\end{corollary}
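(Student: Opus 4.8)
The plan is to deduce this directly from the Approximation Theorem (Theorem \ref{Appr}) by specializing the set $Y$ to the singleton $\{y\}$. Since $y\neq z$, we have $Y\setminus\{z\}=\{y\}$, so the theorem's conclusion ``for every $y'\in Y\setminus\{z\}$ there exists an asymptotic approximation\ldots'' collapses to a statement about the single point $y$.

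First I would set up the dichotomy explicitly. Either some $\alpha\in\mathcal{P}_{y}$ reaches $z$, in which case the first alternative of the corollary holds and there is nothing further to prove; or no $\alpha\in\mathcal{P}_{y}$ reaches $z$. The key observation is that this second case is \emph{precisely} the hypothesis of Theorem \ref{Appr} taken with $Y=\{y\}$: the clause ``$z$ cannot be reached by any $\alpha\in\mathcal{P}$ such that $a_{0}\in Y\setminus\{z\}$'' becomes ``no $\alpha\in\mathcal{P}_{y}$ reaches $z$.'' Hence the theorem applies and yields, for the single element $y\in Y\setminus\{z\}$, an asymptotic approximation $\alpha=(a_{\iota})_{\iota<\lambda}$ to $z$ with $a_{0}=y$; that is, an approximation $\alpha\in\mathcal{P}_{y}$.

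I would also record at the outset that $\mathcal{P}_{y}$ is nonempty, so that both horns of the dichotomy are meaningful: the length-one family $(y)$ lies in $\mathcal{P}_{1}$, which is identified with $X$, and conditions (i)--(iii) are vacuous when $\lambda=1$. There is no genuine obstacle here, as the substantive content is entirely supplied by Theorem \ref{Appr}; the only thing requiring care is the bookkeeping that the theorem's hypothesis on $Y\setminus\{z\}$ coincides with the negation of the first alternative for the singleton. Since that identification is immediate, the proof reduces to a single invocation of the Approximation Theorem once the case split has been made explicit.
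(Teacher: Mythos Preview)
Your proposal is correct and follows exactly the paper's own approach: the paper's proof consists of the single sentence that the corollary is a special case of the Approximation Theorem with $Y=\{y\}$, $y\neq z$. Your additional bookkeeping (the explicit dichotomy and the remark that $\mathcal{P}_{y}\neq\emptyset$) is harmless elaboration of the same idea.
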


\begin{proof}
The corollary is a special case of the Approximation Theorem, taking $Y=\{y\}
$, where $y\neq z$.
\end{proof}

The proof of the Approximation Theorem suggests the method to reach or to
approximate the fixed point.

Let $y\in Y$. If $y=z$ there is nothing to do. If $y\neq z$ let $a_{0}=y$ and
$a_{1}=\varphi(a_{0})\neq a_{0}$. If $a_{1}=z$ then $z$\ has been reached by
the family consisting only of $a_{0}$, $a_{1}$. If $a_{1}\neq z$ let
$a_{2}=\varphi(a_{1})\neq a_{1}$. The procedure may be iterated. It may happen
that there exists $n_{0}>2$ such that $a_{n_{0}}=z$, so $z$\ has been reached.
Or, for every $n<\omega$, $a_{n}\neq z$. Let $\alpha=(a_{n})_{n<\omega}$. If
the set $I_{\omega}(\alpha)$\ consists of only one element, this element is
the fixed point $z$. If $I_{\omega}(\alpha)$\ has more than one element, we
may choose any one of the elements of $I_{\omega}(\alpha)$\ and call it
$a_{\omega}$. Then $a_{\omega+1}=\varphi(a_{\omega})$ if $\varphi(a_{\omega
})\neq a_{\omega}$, $a_{\omega+2}=\varphi(a_{\omega+1})$ if $\varphi
(a_{\omega+1})\neq a_{\omega+1}$, etc.. It may happen that there exists
$n\geq0$ such that $a_{\omega+n}=z$, or one needs to consider $I_{2\omega
}(\alpha^{\prime})$, where $\alpha^{\prime}=(a_{\iota}^{\prime})_{\iota
<2\omega}$, with $a_{\iota}^{\prime}=a_{\iota}$ for $\iota<\omega$ and
$a_{\iota}^{\prime}$, defined as indicated for $\omega\leq\iota<2\omega$. Even
though there exists a family $\alpha\in\mathcal{P}$\ which reaches or
approximates $z$, in general it is not possible to predict what will happen,
in particular, when the algorithm will stop.

\subsection{\textbf{The case when $\Gamma$ is totally ordered}}

Henceforth we shall assume that $\Gamma$\ is totally ordered and that
$\Gamma^{\bullet}$\ does not have a smallest element.

We shall use the following notations:

$\mathcal{A}$ = set of all approximations $\alpha$\ to $z$,

$\mathcal{PC}$ = set of all pseudo-convergent families in $X$.

\begin{proposition}
\label{apc}\textit{We have:}

\begin{enumerate}
\item $\mathcal{A}\subseteq\mathcal{PC}$\textit{.}

\item \textit{Let }$\mathcal{PLA}$\textit{\ be the set of all pseudo-limits of
all }$\alpha\in\mathcal{A}$\textit{. Then }$\mathcal{PLA}=\{z\}$\textit{.}

\item \textit{If }$\alpha\in\mathcal{A}$\textit{, if }$\alpha^{\prime}%
\in\mathcal{P}$\textit{\ and }$\alpha<\alpha^{\prime}$\textit{\ then }%
$\alpha^{\prime}$\textit{\ reaches }$z$\textit{.}
\end{enumerate}
\end{proposition}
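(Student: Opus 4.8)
The plan is to reduce all three parts to a single computational lemma: for any approximation $\alpha=(a_{\iota})_{\iota<\lambda}\in\mathcal{A}$ (so $\lambda$ is a limit ordinal and $I_{\lambda}(\alpha)=\{z\}$) and any $\iota<\kappa<\lambda$ one has $d(a_{\iota},a_{\kappa})=d(a_{\iota},a_{\iota+1})$. I would prove this by transfinite induction on $\kappa$. In the successor step $\kappa=\nu+1$, condition (ii) gives $d(a_{\nu},a_{\nu+1})<d(a_{\iota},a_{\iota+1})$, which by induction equals $d(a_{\iota},a_{\nu})$; applying (d3') to the triangle $a_{\iota},a_{\nu},a_{\nu+1}$ in both directions then forces $d(a_{\iota},a_{\nu+1})=d(a_{\iota},a_{\iota+1})$. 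The limit step $\kappa=\mu$ is identical after choosing an intermediate index $\iota<\nu<\mu$, except that condition (iii) now supplies the needed bound $d(a_{\mu},a_{\nu})\leq d(a_{\nu},a_{\nu+1})<d(a_{\iota},a_{\iota+1})$. This lemma has two immediate payoffs: it identifies the invariant of the family as $\xi_{\iota}=d(a_{\iota},a_{\iota+1})$, and it shows the principal balls $B_{\iota}(\alpha)$ form a strictly decreasing chain, since $a_{\kappa}\in B_{\iota}$ for $\iota<\kappa$ while $B_{\kappa}$ has the smaller radius $d(a_{\kappa},a_{\kappa+1})$, so Lemma \ref{balls} yields $B_{\kappa}\subseteq B_{\iota}$. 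I expect this induction to be the only genuine work; everything else is bookkeeping.

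For part (1), with the lemma in hand I would take $\iota_{0}=0$ and verify the defining inequality of pseudo-convergence directly: for $\iota<\kappa<\mu<\lambda$,
$$d(a_{\kappa},a_{\mu})=d(a_{\kappa},a_{\kappa+1})<d(a_{\iota},a_{\iota+1})=d(a_{\iota},a_{\kappa}),$$
where the strict inequality is exactly condition (ii). Hence $\alpha\in\mathcal{PC}$, giving $\mathcal{A}\subseteq\mathcal{PC}$.

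For part (2) I would prove that each $\alpha\in\mathcal{A}$ has $z$ as its \emph{unique} pseudo-limit. That $z$ is a pseudo-limit is immediate: $z\in I_{\lambda}(\alpha)$ means $d(z,a_{\iota})\leq d(a_{\iota},a_{\iota+1})=\xi_{\iota}$ for every $\iota<\lambda$, so $\iota_{1}=0$ works. Conversely, if $y$ is any pseudo-limit of $\alpha$ there is $\iota_{1}$ with $d(y,a_{\iota})\leq\xi_{\iota}=d(a_{\iota},a_{\iota+1})$ for all $\iota_{1}\leq\iota<\lambda$, that is $y\in B_{\iota}(\alpha)$ for all such $\iota$. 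Because the balls form a decreasing chain, the tail intersection $\bigcap_{\iota\geq\iota_{1}}B_{\iota}(\alpha)$ coincides with $I_{\lambda}(\alpha)=\{z\}$, forcing $y=z$. Since this holds for every $\alpha\in\mathcal{A}$, we obtain $\mathcal{PLA}=\{z\}$.

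For part (3), write $\alpha'=(a'_{\iota})_{\iota<\lambda'}$ with $\alpha<\alpha'$. As the order on $\mathcal{P}$ is trivial within each fixed length $\mathcal{P}_{\lambda}$, strictness of $\alpha<\alpha'$ forces $\lambda<\lambda'$; thus $a'_{\lambda}$ is defined and $a'_{\iota}=a_{\iota}$ for $\iota<\lambda$. Since $\lambda$ is a limit ordinal, condition (iii) applied to $\alpha'$ at $\mu=\lambda$ gives $d(a'_{\lambda},a_{\iota})\leq d(a_{\iota},a_{\iota+1})$ for every $\iota<\lambda$, i.e. $a'_{\lambda}\in I_{\lambda}(\alpha)=\{z\}$. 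Hence $a'_{\lambda}=z$ and $\alpha'$ reaches $z$. The only subtlety to flag is this last invocation of (iii), together with the remark that $\alpha<\alpha'$ strictly extends the domain, which is what makes $a'_{\lambda}$ available and subject to the limit condition.
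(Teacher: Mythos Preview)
Your proof is correct and follows essentially the same route as the paper: both isolate the identity $d(a_\iota,a_\kappa)=d(a_\iota,a_{\iota+1})$ for $\iota<\kappa<\lambda$, prove it by transfinite induction on $\kappa$, and then read off parts (1)--(3) exactly as you do. The only noteworthy difference is that your induction steps use only (d3$'$) together with conditions (ii) and (iii), whereas the paper's successor and limit cases also invoke the strict contraction of $\varphi$ (via $d(\varphi(a_\iota),\varphi(a_\kappa))<d(a_\iota,a_\kappa)$); your version is thus marginally cleaner but not a genuinely different approach.
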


\begin{proof}
1): We show that $\alpha$\ is a pseudo-convergent family in $X$. Since
$\alpha$\ is an approximation to $z$\ then $\lambda$\ is a limit ordinal. We
shall prove that if $\iota<\mu<\nu<\lambda$ then $d(a_{\iota},a_{\mu
})>d(a_{\mu},a_{\nu})$. For this purpose, we prove that $d(a_{\iota}%
,a_{\iota+1})=d(a_{\iota},a_{\mu})$. The proof is by induction on $\mu$. It is
trivial if $\mu=\iota+1$.

Let $\iota+1<\mu$. We consider two cases:

i) $\mu=\kappa+1$.

By induction, $d(a_{\iota},a_{\iota+1})=d(a_{\iota},a_{\kappa})$, since
$a_{\iota}\neq a_{\kappa}$ then $d(a_{\iota},a_{\kappa})>d(a_{\iota+1},a_{\mu
})$ and therefore $d(a_{\iota},a_{\mu})=d(a_{\iota},a_{\iota+1})$.

ii) $\mu$ is a limit ordinal.

By construction of $\alpha$, we have $d(a_{\mu},a_{\iota})\leq d(a_{\iota
},a_{\iota+1})$ for all $\iota<\lambda$. If $d(a_{\mu},a_{\iota})<d(a_{\iota
},a_{\iota+1})$ for some $\iota<\lambda$ then $d(a_{\iota},a_{\iota
+1})=d(a_{\mu},a_{\iota+1})$. From $a_{\mu}\neq a_{\iota}$ then $d(a_{\mu
+1},a_{\iota+1})<d(a_{\mu},a_{\iota})<d(a_{\mu},a_{\iota+1})$, hence
$d(a_{\iota},a_{\iota+1})=d(a_{\mu},a_{\mu+1})$. This is absurd, so
$d(a_{\iota},a_{\iota+1})=d(a_{\mu},a_{\iota})$ for all $\iota<\mu$. This
concludes the proof by induction.

In a similar way $d(a_{\mu},a_{\nu})=d(a_{\mu},a_{\mu+1})$ for $\mu
<\nu<\lambda$. It follows that if $\iota<\mu<\nu<\lambda$ then $d(a_{\iota
},a_{\mu})=d(a_{\iota},a_{\iota+1})>d(a_{\mu},a_{\mu+1})=d(a_{\mu},a_{\nu})$.
So we have proved that $\alpha$\ is a pseudo-convergent family in $X$.

2): Assume that $\alpha\in\mathcal{A}$\ then $I_{\lambda}(\alpha)=\{z\}$, so
$d(z,a_{\iota})\leq d(a_{\iota},a_{\iota+1})=d(a_{\iota},a_{\mu})$ for all
$\mu$\ such that $\iota<\mu<\lambda$. Thus $z$\ is a pseudo-limit of the
pseudo-convergent family $\alpha$.

Let $t\in X$, $t\neq z$, then $t\notin I_{\lambda}(\alpha)$. So there exists
$\iota_{0}<\lambda$ such that $d(t,a_{\iota_{0}})\nleq d(a_{\iota_{0}%
},a_{\iota_{0}+1})$, that is $t\notin B_{\iota_{0}}(\alpha)$. Hence for every
$\iota$\ such that $\iota_{0}<\iota<\lambda$, we also have $t\notin B_{\iota
}(\alpha)$, that is $d(t,a_{\iota})\nleq d(a_{\iota},a_{\iota+1})=d(a_{\iota
},a_{\mu})$ for $\iota<\mu<\lambda$. So $t$\ is not a pseudo-limit of $\alpha$.

3): Let $\alpha^{\prime}\in\mathcal{P}$ be such that $\alpha<\alpha^{\prime}$.
Since $\alpha^{\prime}\in\mathcal{P}$\ we have for every $\iota<\lambda$,
$d(a_{\lambda}^{\prime},a_{\iota}^{\prime})\leq d(a_{\iota}^{\prime}%
,a_{\iota+1}^{\prime})$ or equivalently, $d(a_{\lambda}^{\prime},a_{\iota
})\leq d(a_{\iota},a_{\iota+1})$ because $a_{\iota}^{\prime}=a_{\iota}$,
$a_{\iota+1}^{\prime}=a_{\iota+1}$. Hence $a_{\lambda}^{\prime}\in I_{\lambda
}(\alpha)=\{z\}$. So $\alpha^{\prime}$ reaches $z$.
\end{proof}

Let $\alpha=(a_{\iota})_{\iota<\lambda}\in\mathcal{P}$, let $\Sigma_{\alpha
}=\{d(a_{\iota},\varphi(a_{\iota}))\mid\iota<\lambda\}$. We note that
$0\in\Sigma_{\alpha}$ if and only if $\alpha$\ reaches $z$\ and, in this case,
$\lambda$\ is not a limit ordinal. Let $\Lambda_{\varphi}=\{d(x,\varphi
(x))\mid x\in X,x\neq z\}$. Then $\Sigma_{\alpha}\setminus\{0\}\subseteq
\Lambda_{\varphi}\subseteq\Gamma^{\bullet}$.

Let $(Y,d,\Gamma)$ be a subspace of $(X,d,\Gamma)$. If $\varphi$\ is such that
$\varphi(Y)\subseteq Y$, let $\Lambda_{\varphi}^{Y}=\{d(y,\varphi(y))\mid y\in
Y,y\neq z\}$. Since $X$\ is principally complete (and $\Gamma$\ totally
ordered), $X$ is spherically complete. If moreover, $d(Y\times Y)\setminus
\{0\}$ is coinitial in $d(X\times X)\setminus\{0\}$ then by Theorem
\ref{compl}, $Y$ has one and exactly one completion $\widehat{Y}$ in $X$.

\begin{proposition}
\label{coin}\textit{Let }$\alpha$ \textit{be an approximation to }$z$\textit{.
Then we have:}

\begin{enumerate}
\item $\Sigma_{\alpha}$ \textit{is coinitial in }$\Lambda_{\varphi}$.

\item \textit{Assume that }$(Y,d,\Gamma)$\textit{\ is a subspace of
}$(X,d,\Gamma)$\textit{\ and that }$\varphi(Y)\subseteq Y$\textit{. Assume
moreover, that }$d(Y\times Y)\setminus\{0\}$\textit{\ is coinitial in
}$d(X\times X)\setminus\{0\}$\textit{. If }$z\in\widehat{Y}\setminus
Y$\textit{\ then }$\Lambda_{\varphi}^{Y}$\textit{\ is coinitial in }%
$\Lambda_{\varphi}$\textit{.}

\item \textit{If }$\Sigma_{\alpha}$\textit{\ is coinitial in }$\Gamma
^{\bullet}$\textit{\ then }$\alpha$\textit{\ is a Cauchy family and }%
$z=\lim\alpha$\textit{.}

\item \textit{If }$X$\textit{\ is solid then }$\Lambda_{\varphi}%
=\Gamma^{\bullet}$\textit{, furthermore, }$\alpha$\textit{\ is a Cauchy family
and }$z=\lim\alpha$\textit{.}
\end{enumerate}
\end{proposition}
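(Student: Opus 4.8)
The plan is to reduce all four assertions to one elementary identity which I would establish first: \emph{for every $x\in X$ with $x\neq z$ one has $d(x,\varphi(x))=d(x,z)$}. Indeed, since $\varphi$ is strictly contracting and $\varphi(z)=z$, we get $d(\varphi(x),z)=d(\varphi(x),\varphi(z))<d(x,z)$; as $\Gamma$ is totally ordered, the ultrametric isosceles principle (d3$^{\prime}$) applied to the triple $x,\varphi(x),z$ forces $d(x,\varphi(x))=\max\{d(x,z),d(\varphi(x),z)\}=d(x,z)$. Consequently $\Lambda_{\varphi}=\{d(x,z)\mid x\in X,\ x\neq z\}$, and since $\varphi(a_{\iota})=a_{\iota+1}$ and $\alpha$ does not reach $z$, also $\Sigma_{\alpha}=\{d(a_{\iota},a_{\iota+1})\mid\iota<\lambda\}=\{d(a_{\iota},z)\mid\iota<\lambda\}$. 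Recording this identity makes every defining distance a distance to $z$, after which the four parts become short.

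For (1), let $x\neq z$ and put $\mu=d(x,\varphi(x))=d(x,z)$. Since $I_{\lambda}(\alpha)=\{z\}$ and $x\neq z$, there is $\iota_{0}<\lambda$ with $x\notin B_{\iota_{0}}(\alpha)$, i.e. $d(x,a_{\iota_{0}})>d(a_{\iota_{0}},a_{\iota_{0}+1})$; on the other hand $z\in I_{\lambda}(\alpha)\subseteq B_{\iota_{0}}(\alpha)$ gives $d(z,a_{\iota_{0}})\le d(a_{\iota_{0}},a_{\iota_{0}+1})<d(x,a_{\iota_{0}})$. Applying the isosceles principle to $x,z,a_{\iota_{0}}$ then yields $d(x,z)=d(x,a_{\iota_{0}})>d(a_{\iota_{0}},a_{\iota_{0}+1})$, so the element $d(a_{\iota_{0}},\varphi(a_{\iota_{0}}))\in\Sigma_{\alpha}$ lies strictly below $\mu$. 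This is precisely coinitiality of $\Sigma_{\alpha}$ in $\Lambda_{\varphi}$.

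For (2), the hypotheses already guarantee (via Theorem \ref{compl}(3), invoked just before the statement) that $Y$ is dense in its completion $\widehat{Y}$. Given $x\neq z$, I would apply density at the point $z\in\widehat{Y}$ with radius $\gamma=d(x,z)=d(x,\varphi(x))>0$ to produce $y\in Y$ with $d(y,z)<\gamma$; since $z\notin Y$ we have $y\neq z$, so $d(y,\varphi(y))=d(y,z)<d(x,\varphi(x))$, giving coinitiality of $\Lambda_{\varphi}^{Y}$ in $\Lambda_{\varphi}$.

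For (3) and (4) I would use that $\alpha$ is pseudo-convergent (Proposition \ref{apc}(1)), so $d(a_{\iota},a_{\kappa})=d(a_{\iota},a_{\iota+1})$ whenever $\iota<\kappa<\lambda$, and that $(d(a_{\iota},a_{\iota+1}))_{\iota}$ is strictly decreasing. If $\Sigma_{\alpha}$ is coinitial in $\Gamma^{\bullet}$, then for any $\gamma\in\Gamma^{\bullet}$ I choose $\iota_{1}$ with $d(a_{\iota_{1}},a_{\iota_{1}+1})\le\gamma$; for every $\iota\ge\iota_{1}+1$ monotonicity gives $d(a_{\iota},a_{\iota+1})<\gamma$, whence $d(a_{\iota},a_{\kappa})<\gamma$ for $\iota_{1}+1\le\iota<\kappa$ (Cauchy) and $d(z,a_{\iota})\le d(a_{\iota},a_{\iota+1})<\gamma$ from $z\in B_{\iota}(\alpha)$, so $z=\lim\alpha$; this is (3). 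For (4), solidity provides, for each $\gamma\in\Gamma^{\bullet}$, a point $y\in X$ with $d(z,y)=\gamma$, and then $y\neq z$ with $d(y,\varphi(y))=d(y,z)=\gamma$, so $\Gamma^{\bullet}\subseteq\Lambda_{\varphi}$ and hence $\Lambda_{\varphi}=\Gamma^{\bullet}$; combining this with (1) makes $\Sigma_{\alpha}$ coinitial in $\Gamma^{\bullet}$, and (3) finishes the argument. The only genuinely delicate point is the identity $d(x,\varphi(x))=d(x,z)$ and its use in (1), where one must play $z\in B_{\iota_{0}}(\alpha)$ against $x\notin B_{\iota_{0}}(\alpha)$ and apply the isosceles principle in the correct triangle; once that is in place, (2)–(4) follow directly from density, pseudo-convergence, and solidity.
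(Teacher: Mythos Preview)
Your proof is correct. The organizing identity $d(x,\varphi(x))=d(x,z)$ for $x\neq z$ is a clean device; the paper also uses it, but only rederives it locally inside the proofs of (2) and (4) rather than stating it once at the outset.

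The one place your route genuinely diverges from the paper is part~(1). You argue directly: since $I_{\lambda}(\alpha)=\{z\}$ and $x\neq z$, pick $\iota_{0}$ with $x\notin B_{\iota_{0}}(\alpha)$, compare with $z\in B_{\iota_{0}}(\alpha)$, and apply the isosceles principle to the triangle $x,z,a_{\iota_{0}}$. The paper instead argues by contradiction: assuming $\Sigma_{\alpha}$ is not coinitial, it takes a witness $x$ with $d(x,\varphi(x))<d(a_{\iota},a_{\iota+1})$ for all $\iota$, extends $\alpha$ to $\alpha'\in\mathcal{P}$ by setting $a_{\lambda}'=x$, and then invokes Proposition~\ref{apc}(3) to force $x=z$. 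Your argument is more self-contained (no appeal to Proposition~\ref{apc}(3), no need to check that the extension $\alpha'$ satisfies condition~(iii) of $\mathcal{P}$), while the paper's argument fits its running theme of extending families in $\mathcal{P}$. For (2) the paper phrases the density step via a Cauchy family in $Y$ converging to $z$ rather than quoting density of $Y$ in $\widehat{Y}$ directly, but this is cosmetic; parts (3) and (4) are essentially identical to the paper.
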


\begin{proof}
1): Assume that $\Sigma_{\alpha}$\ is not coinitial in $\Lambda_{\varphi} $.
So there exists $x\in X$, $x\neq z$, such that $d(x,\varphi(x))<d(a_{\iota
},a_{\iota+1})$ for all $\iota<\lambda$. Let $\alpha^{\prime}=(a_{\iota
}^{\prime})_{\iota<\lambda+1}$ be defined by $a_{\iota}^{\prime}=a_{\iota}$
for all $\iota<\lambda$ and $a_{\lambda}^{\prime}=x$. Then $\alpha^{\prime}%
\in\mathcal{P}$ and $\alpha<\alpha^{\prime}$. By Proposition \ref{apc}, part
(3), $\alpha^{\prime}$\ reaches $z$, while $\alpha$\ does not reach $z$. So
$x=a_{\lambda}^{\prime}=z$, and this is absurd.

2): Since $z\in\widehat{Y}\setminus Y$ there exists a limit ordinal $\rho
$\ and a Cauchy family $(y_{\nu})_{\nu<\rho}$, with $y_{\nu}\in Y$, such that
$z=\underset{\nu<\rho}{\lim}\,y_{\nu}$. Let $d(x,\varphi(x))\in\Lambda
_{\varphi}$. Since $\underset{\nu<\rho}{\lim}\,y_{\nu}=z$, there exists
$\nu<\rho$ such that $d(y_{\nu},z)\leq d(x,\varphi(x))$. Thus $d(\varphi
(y_{\nu}),\varphi(z))=d(\varphi(y_{\nu}),z)<d(y_{\nu},z)\leq d(x,\varphi(x))$,
which implies that $d(y_{\nu},\varphi(y_{\nu}))=d(y_{\nu},z)\leq
d(x,\varphi(x))$. Hence $\Lambda_{\varphi}^{Y}$\ is coinitial in
$\Lambda_{\varphi}$.

3): Let $\gamma\in\Gamma^{\bullet}$, by assumption there exists $\iota
_{0}<\lambda$ such that $d(a_{\iota_{0}},\varphi(a_{\iota_{0}}))=d(a_{\iota
_{0}},a_{\iota_{0}+1})\leq\gamma$. By Proposition \ref{apc}, $\alpha$ is
pseudo-convergent. Hence $d(a_{\iota},a_{\mu})<d(a_{\iota_{0}},a_{\iota_{0}%
+1})\leq\gamma$ for all $\iota$, $\mu$ such that $\iota_{0}<\iota<\mu<\lambda
$. By assumption, $z\in I_{\lambda}(\alpha)$, so $d(z,a_{\iota})\leq
d(a_{\iota},\varphi(a_{\iota}))<\gamma$ for every $\iota$\ such that
$\iota_{0}<\iota<\lambda$. This shows that $\alpha$\ is a Cauchy family and
$z=\lim\alpha$.

4): Let $0<\gamma\in\Gamma$. Since $X$\ is solid, there exists $x\in X$ such
that $d(x,z)=\gamma$. So $x\neq z$, hence $d(z,\varphi(x))=d(\varphi
(z),\varphi(x))<d(z,x)$, which implies that $d(x,\varphi(x))=d(z,x)=\gamma$.
Thus $\Lambda_{\varphi}=\Gamma^{\bullet}$. By (1), $\Sigma_{\alpha}$ is
coinitial in $\Lambda_{\varphi}=\Gamma^{\bullet}$. Hence by (3), $\alpha$ is a
Cauchy family in $X$ and $z=\lim\alpha$.
\end{proof}

In the next theorem, we shall study the following situation:

$(Y,d,\Gamma)$ is an ultrametric space, the mapping $\psi:Y\rightarrow Y$ is
strictly contracting, and the spherically complete ultrametric space
$(X,d,\Gamma)$ is an extension of $Y$, furthermore, we assume that $d(Y\times
Y)\setminus\{0\}$ is coinitial in $d(X\times X)\setminus\{0\}$. (For example,
$X$ could be the spherical completion of $Y$, see Theorem \ref{compl}). By
Theorem \ref{compl}, $Y$ has exactly one completion $\widehat{Y} $ in $X$, and
by Theorem \ref{mapext}, $\psi$ has an extension to a strictly contracting
mapping $\varphi$\ from $X$ to $X$. The mapping $\varphi$\ is not uniquely
determined by $\psi$, however its restriction to the completion $\widehat{Y}%
$\ of $Y$\ in $X$ is uniquely determined (see Theorem \ref{mapext}). In
general, different extensions of $\psi$\ to strictly contracting mappings of
$X$ will lead to different fixed points of these mappings. But if
$z\in\widehat{Y}$ then, since all these extensions coincide on $\widehat{Y}$,
$z$\ is the fixed point of all these mappings.

\begin{theorem}
\label{alg}Let $Y$, $X$ and the mappings $\psi$, $\varphi$ be as desribed
above. \textit{Assume that }$\alpha=(a_{\iota^{\prime}})_{\iota^{\prime
}<\lambda^{\prime}}$\textit{, with }$a_{0}\in Y$\textit{, is (with respect to
}$\varphi$\textit{) an approximation to }$z\in X\setminus Y$\textit{\ and that
furthermore, }$\Sigma_{\alpha}$\textit{\ is coinitial in }$\Gamma^{\bullet}%
$\textit{. Then }$z\in\widehat{Y}$ \textit{and there exists an approximation
}$\beta=(b_{\iota})_{\iota<\lambda}$\textit{\ to }$z$\textit{\ such that
}$b_{0}=a_{0}$\textit{\ and }$b_{\iota}\in Y$\textit{\ for all }$\iota
<\lambda$.
\end{theorem}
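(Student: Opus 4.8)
The plan is to exploit the hypothesis that $\Sigma_\alpha$ is coinitial in $\Gamma^\bullet$ in order to locate $z$ inside $\widehat{Y}$, and then to \emph{rebuild} the approximation using only points of $Y$, sacrificing the property $b_{\iota+1}=\varphi(b_\iota)$ (which would force $b_\iota$ out of $Y$) while retaining enough metric control to satisfy conditions (ii) and (iii) of the definition of $\mathcal{P}$.

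\textbf{Locating $z$ in $\widehat{Y}$.} First I would show $z\in\widehat Y$. By Proposition \ref{coin}(3), since $\Sigma_\alpha$ is coinitial in $\Gamma^\bullet$, the family $\alpha$ is a Cauchy family with $z=\lim\alpha$. The points $a_{\iota'}$ themselves need not lie in $Y$, so I would instead use density of $Y$ in $X$ restricted appropriately: for each $\iota'<\lambda'$ choose $c_{\iota'}\in Y$ with $d(c_{\iota'},a_{\iota'})$ smaller than $d(a_{\iota'},a_{\iota'+1})$ (possible since $d(Y\times Y)\setminus\{0\}$ is coinitial in $d(X\times X)\setminus\{0\}$ and $Y$ can be taken dense enough via this coinitiality). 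Then $(c_{\iota'})$ is a Cauchy family of elements of $Y$ with the same limit $z$, so by definition of the completion $\widehat Y$ as the set of limits of Cauchy families from $Y$, we get $z\in\widehat Y$.

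\textbf{Constructing the $Y$-valued approximation.} This is the heart of the matter and where I expect the main obstacle. Having $z=\lim\alpha$ a Cauchy limit, I want a family $\beta=(b_\iota)_{\iota<\lambda}$ with $b_0=a_0$, all $b_\iota\in Y$, satisfying (ii) and (iii), and with $I_\lambda(\beta)=\{z\}$. The natural strategy is to choose, recursively, $b_\iota\in Y$ approximating $z$ with strictly decreasing ``radii'' $\delta_\iota=d(b_\iota,\varphi(b_\iota))$: at each stage pick $b_\iota\in Y$ with $d(b_\iota,z)$ strictly smaller than all previously used distances, which forces $\delta_\iota=d(b_\iota,\varphi(b_\iota))=d(b_\iota,z)$ (using $d(\varphi(b_\iota),z)=d(\varphi(b_\iota),\varphi(z))<d(b_\iota,z)$ exactly as in the proof of Proposition \ref{coin}(2)). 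The obstacle is ensuring that the sequence of radii can be driven down far enough to pin the intersection to $\{z\}$: I must verify $\Sigma_\beta$ remains coinitial in $\Gamma^\bullet$, which requires that the coinitiality of $d(Y\times Y)\setminus\{0\}$ in $\Gamma^\bullet$ lets me select $b_\iota\in Y$ with $d(b_\iota,z)$ below any prescribed $\gamma\in\Gamma^\bullet$. Here I would reuse Proposition \ref{coin}(2): since $z\in\widehat Y\setminus Y$, the set $\Lambda_\varphi^Y$ is coinitial in $\Lambda_\varphi$, and by part (1) $\Sigma_\alpha$ is coinitial in $\Lambda_\varphi$, so I have a ready supply of $Y$-points whose defect $d(y,\varphi(y))$ is arbitrarily small.

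\textbf{Verifying the axioms and the intersection.} Finally I would check that $\beta$ so constructed lies in $\mathcal{P}$ and is an approximation to $z$. Condition (ii), strict decrease of $\delta_\iota$, holds by the choice of strictly decreasing target distances $d(b_\iota,z)$; condition (iii) at limit stages $\mu$ follows because $d(b_\mu,b_\iota)\le\max\{d(b_\mu,z),d(z,b_\iota)\}=d(z,b_\iota)\le\delta_\iota$ by the ultrametric inequality together with $d(b_\mu,z)<d(z,b_\iota)$. For the intersection, $z\in B_\iota(\beta)$ for every $\iota$ because $d(z,b_\iota)=\delta_\iota$, and if $t\in I_\lambda(\beta)$ with $t\neq z$, then $d(t,z)=\gamma>0$; choosing $\iota$ with $\delta_\iota<\gamma$ gives $d(t,b_\iota)=d(t,z)=\gamma>\delta_\iota$ (again by the ultrametric identity, since $d(z,b_\iota)=\delta_\iota<\gamma$), contradicting $t\in B_\iota(\beta)$. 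Hence $I_\lambda(\beta)=\{z\}$, so $\beta$ is the desired approximation with all terms in $Y$. The length $\lambda$ of $\beta$ need not equal $\lambda'$; it is governed by the order type needed to exhaust a coinitial subset of $\Gamma^\bullet$, and the cardinality bound $\mathrm{card}\,\kappa>\mathrm{card}\,\Gamma$ guarantees $\lambda<\kappa$ so that $\beta\in\mathcal P$.
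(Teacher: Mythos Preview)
Your proposal has a fundamental gap: you explicitly plan to ``sacrifice the property $b_{\iota+1}=\varphi(b_\iota)$,'' but this property is condition (i) in the definition of $\mathcal{P}$, and an \emph{approximation} is by definition an element of $\mathcal{P}$. A family in which the successor terms are chosen freely rather than by iterating $\varphi$ is simply not an approximation in the sense of the paper, so your construction does not prove the theorem. Moreover, your stated reason for abandoning (i)---that it ``would force $b_\iota$ out of $Y$''---is incorrect: since $\varphi$ extends $\psi\colon Y\to Y$, whenever $b_\iota\in Y$ one has $\varphi(b_\iota)=\psi(b_\iota)\in Y$. The difficulty in staying inside $Y$ arises \emph{only} at limit ordinals $\mu$, where one must choose $b_\mu\in I_\mu(\beta)$, and that intersection may fail to meet $Y$. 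Once (i) is dropped, your verification of (ii) and (iii) also breaks down, because conditions (ii) and (iii) concern $d(b_\iota,b_{\iota+1})$, while your radii $\delta_\iota=d(b_\iota,\varphi(b_\iota))$ no longer coincide with $d(b_\iota,b_{\iota+1})$.

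The paper's proof keeps condition (i) intact and instead applies Zorn's Lemma to the subset $\mathcal{T}\subseteq\mathcal{P}$ of families with $b_0=a_0$, all $b_\iota\in Y$, and $z\in\bigcap_\iota B_\iota$. Successor steps automatically stay in $Y$ via $\psi$; a maximal $\beta\in\mathcal{T}$ has limit length $\lambda$, and if some $t\neq z$ lay in $I_\lambda(\beta)$ one uses $z\in\widehat{Y}$ to find $y\in Y$ with $d(y,z)\le d(t,z)$ and extend $\beta$ by $y$ at the limit position, contradicting maximality. Your argument for $z\in\widehat{Y}$ is also flawed: coinitiality of $d(Y\times Y)\setminus\{0\}$ in $d(X\times X)\setminus\{0\}$ is a statement about which distances occur, not about geometric density, so it does not let you approximate an arbitrary $a_{\iota'}\in X$ by an element of $Y$. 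The correct route to $z\in\widehat{Y}$ uses that the initial $\omega$-segment $(a_n)_{n<\omega}$ already lies in $Y$ (since $a_{n+1}=\psi(a_n)$) together with the Cauchy property of $\alpha$ furnished by Proposition~\ref{coin}(3).
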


\begin{proof}
By Proposition \ref{coin}, $\alpha$ is a Cauchy family and $z=\lim\alpha
\in\widehat{Y}$.

We now refer to the proof of Theorem \ref{Appr}.

Let $\kappa$, $\mathcal{P}\ $and the order relation on $\mathcal{P}\ $be as
described there. Let $\mathcal{T}$ be the set of all $\beta=(b_{\iota}%
)_{\iota<\lambda}$\ of $\mathcal{P}$\ such that $b_{0}=a_{0}$, $b_{\iota}\in
Y$ for every $\iota<\lambda$ and $z\in\underset{\iota<\lambda}{%
{\textstyle\bigcap}
}B_{\iota}$, where $B_{\iota}=B_{d(b_{\iota},\psi(b_{\iota}))}(b_{\iota})$.
(We note that $b_{\iota}\neq z$ for every $\iota<\lambda$, because
$z\in\widehat{Y}\setminus Y$).

First we show that $\mathcal{T}$, with the restriction of the order of
$\mathcal{P}$, is inductive. Let $C$\ be a non-empty set, for every $c\in C$
let $\beta^{c}=(b_{\iota}^{c})_{\iota<\lambda_{c}}\in\mathcal{T}$. Assume that
$\beta^{c}\neq\beta^{c^{\prime}}$, if $c\neq c^{\prime}$, and that
$\mathcal{B}=\{\beta^{c}\mid c\in C\}$ is totally ordered. If $L=\{\lambda
_{c}\mid c\in C\}$ has a largest element $\lambda_{c_{1}}$, it follows, as
shown in the proof of Theorem \ref{Appr}, that $\beta^{c_{1}}$\ is an upper
bound for $\mathcal{B}$.

Thus there remains the case that $L$ does not have a largest element. We
conclude as in part (b) of the proof of Theorem \ref{Appr} that there exists
the smallest ordinal $\mu$ such that $\lambda_{c}<\mu$ for every $c\in C$,
that $\mu\leq\kappa$ and that $\mu$\ is a limit ordinal. Now we define in
similar way, as explained there, a family $\widetilde{\beta}=(\widetilde
{b_{\iota}})_{\iota<\mu}$\ which belongs to $\mathcal{P}$\ and which
furthermore has the following properties: $\widetilde{b_{0}}=a_{0}$,
$\widetilde{b_{\iota}}\in Y$ for every $\iota<\mu$\ and $z\in\underset
{\iota<\mu}{%
{\textstyle\bigcap}
}\widetilde{B_{\iota}}$, with $\widetilde{B_{\iota}}=B_{d(\widetilde{b}%
_{\iota},\psi(\widetilde{b}_{\iota}))}(\widetilde{b}_{\iota})$. Thus
$\widetilde{\beta}\in\mathcal{T}$ is an upper bound for $\mathcal{B}$. Hence
$\mathcal{T}$\ is inductive.

Moreover, $\mathcal{T}\neq\emptyset$, because $(a_{\iota^{\prime}}%
)_{\iota^{\prime}<\omega_{0}}\in\mathcal{T}$. Thus by Zorn's Lemma,
$\mathcal{T}$\ has a maximal element $\beta=(b_{\iota})_{\iota<\lambda}$. Then
$\lambda$\ is a limit ordinal. Indeed, if not, let $\lambda=\iota_{0}+1$.
Since $b_{\iota_{0}}\in Y$, also $\psi(b_{\iota_{0}})\in Y$, so $b_{\iota_{0}%
}\neq z$, $\psi(b_{\iota_{0}})\neq z$ and $b_{\iota_{0}}\neq\psi(b_{\iota_{0}%
})$. Therefore $d(z,\psi^{2}(b_{\iota_{0}}))<d(z,\psi(b_{\iota_{0}%
}))<d(z,b_{\iota_{0}})$, hence $d(\psi(b_{\iota_{0}}),\psi^{2}(b_{\iota_{0}%
}))=d(z,\psi(b_{\iota_{0}}))<d(z,b_{\iota_{0}})=d(b_{\iota_{0}},\psi
(b_{\iota_{0}}))$. Thus if $b_{\iota}^{\ast}=b_{\iota}$ for $\iota<\lambda$
and $b_{\lambda}^{\ast}=\psi(b_{\iota_{0}})$ then $\beta<\beta^{\ast
}=(b_{\iota}^{\ast})_{\iota<\lambda+1}$, furthermore $z\in B_{d(b_{\lambda
}^{\ast},\psi(b_{\lambda}^{\ast}))}(b_{\lambda}^{\ast})$, so $\beta^{\ast}%
\in\mathcal{T}$ contrary to the maximality of $\beta$\ in $\mathcal{T}$.
Hence, $\lambda$\ is a limit ordinal. Since $z\in\underset{\iota<\lambda}{%
{\textstyle\bigcap}
}B_{\iota}(\beta)$, we have $z\in I_{\lambda}(\beta)$. Assume there exists
$t\in X$ such that $t\neq z$ and $t\in I_{\lambda}(\beta)$. Then $0<d(t,z)$.
Since $z\in\widehat{Y}\setminus Y$, there exists a Cauchy family $(y_{\nu
})_{\nu<\rho}$ in $Y$, $\rho$ a limit ordinal, such that $z=\underset{\nu
<\rho}{\lim}\,y_{\nu}$. Thus there exists $\nu_{0}<\rho$ such that
$d(z,y_{\nu_{0}})\leq d(t,z)$. Then $d(\psi(y_{\nu_{0}}),z)=d(\varphi
(y_{\nu_{0}}),z)<d(y_{\nu_{0}},z)\leq d(t,z)$. So $d(\psi(y_{\nu_{0}}%
),y_{\nu_{0}})=d(y_{\nu_{0}},z)\leq d(t,z)$. It follows that if $b_{\iota
}^{\prime}=b_{\iota}$ for $\iota<\lambda$ and $b_{\lambda}^{\prime}=y_{\nu
_{0}}$ then $\beta^{\prime}=(b_{\iota}^{\prime})_{\iota<\lambda+1}>\beta$ and
moreover, $\beta^{\prime}\in\mathcal{T}$, because $z\in B_{\lambda
}=B_{d(y_{\nu_{0}},\psi(y_{\nu_{0}}))}(y_{\nu_{0}})$. This contradicts the
maximality of $\beta$\ in $\mathcal{T}$. Hence $I_{\lambda}(\beta)=\{z\}$.
\end{proof}

In our papers \cite{DE} and \cite{SDE}, we give some applications of the
results of this section to provide solutions or approximations to solutions of
twisted polynomial equations and of polynomial differential equations.

\bigskip


\end{document}